\documentclass[a4paper,11pt]{amsart}
\usepackage[margin= 2.5 cm,bottom=19mm]{geometry}
\usepackage{thm-restate}
\usepackage{graphicx}
\usepackage{geometry}
\usepackage{amsthm}
\usepackage{amssymb}
\usepackage{amsmath}
\usepackage{comment}
\usepackage{hyperref}
\usepackage{xcolor}
\usepackage{enumerate}
\usepackage{enumitem}
\usepackage{listings}
\usepackage{complexity}
\usepackage{blkarray}
\usepackage{lmodern}
\usepackage[english]{babel}
\usepackage{accents}
\usepackage{float}
\usepackage{tikz}
\usetikzlibrary{decorations.pathmorphing}

\tikzset{snake it/.style={decorate, decoration=snake}}

\usetikzlibrary{calc}
\usetikzlibrary{decorations.pathreplacing}
\usetikzlibrary{positioning,patterns}
\usetikzlibrary{arrows,shapes,positioning}
\usetikzlibrary{decorations.markings}

\tikzstyle{edge}=[very thick]
\definecolor{bostonuniversityred}{rgb}{0.8, 0.0, 0.0}
\definecolor{arsenic}{rgb}{0.23, 0.27, 0.29}
\tikzstyle{diredge}=[postaction={decorate,decoration={markings,
		mark=at position .95 with {\arrow[scale = 1]{stealth};}}}]

\newcommand{\fitellipsis}[2] % first and second node names without parentheses
{\draw [fill=gray]let \p1=(#1), \p2=(#2), \n1={atan2(\y2-\y1,\x2-\x1)}, \n2={veclen(\y2-\y1,\x2-\x1)}
    in ($ (\p1)!0.5!(\p2) $) ellipse [ x radius=\n2/2+0cm, y radius=0.4cm, rotate=\n1];
}

\usepackage[font=small,labelfont=bf]{caption}
\usepackage[font=small,labelfont=normalfont,labelformat=simple]{subcaption}
\widowpenalty10000
\clubpenalty10000

\graphicspath{{figs/}}

% custom environments
\newtheorem{theorem}{Theorem}[section]
\newtheorem{lemma}[theorem]{Lemma}

\newtheorem{conjecture}[theorem]{Conjecture}

\theoremstyle{definition}

\setlength{\tabcolsep}{20pt}

\author[Steiner]{Raphael Steiner}
\address[Steiner]{Department of Computer Science, Institute of Theoretical Computer Science, ETH Z\"{u}rich, Switzerland}
\email{\tt {raphaelmario.steiner}@inf.ethz.ch}

\thanks{The author was funded by an ETH Z\"{u}rich Postdoctoral Fellowship.
}

\date{\today}

\title{Improved bound for improper colorings of graphs with no odd clique minor}
\begin{document} 
\maketitle

\begin{abstract}
Strengthening Hadwiger's conjecture, Gerards and Seymour conjectured in 1995 that every graph with no odd $K_t$-minor is properly $(t-1)$-colorable, this is known as the \emph{Odd Hadwiger's conjecture}. 

We prove a relaxation of the above conjecture, namely we show that every graph with no odd $K_t$-minor admits a vertex $(2t-2)$-coloring such that all monochromatic components have size at most $\lceil \frac{1}{2}(t-2) \rceil$. The bound on the number of colors is optimal up to a factor of $2$, improves previous bounds for the same problem by Kawarabayashi (2008), Kang and Oum~(2019), Liu and Wood (2021), and strengthens a result by van den Heuvel and Wood (2018), who showed that the above conclusion holds under the more restrictive assumption that the graph is $K_t$-minor free. In addition, the bound on the component-size in our result is much smaller than those of previous results, in which the dependency on $t$ was non-explicit.

Our short proof combines the method by van den Heuvel and Wood for $K_t$-minor free graphs with some additional ideas, which make the extension to odd $K_t$-minor free graphs possible.
\end{abstract}

\section{Introduction}

\textbf{Preliminaries and Notation.} The terminology used in this paper is largely standard in graph theory, in the following we only outline some notions more specific to this paper. 

For an integer $k \ge 1$, we denote by $[k]:=\{1,\ldots,k\}$ the set of integers from $1$ to $k$. 
Given a graph $G$ and two vertex-disjoint subgraphs $H_1$ and $H_2$ of $G$, we say that $H_1$ and $H_2$ are \emph{adjacent} (in $G$) if there exist vertices $x \in V(H_1), y \in V(H_2)$ such that $xy \in E(G)$. By a \emph{component} of a graph we mean one of its connected components, and this is a subset of vertices (not a subgraph). 
Given a graph $G$, a vertex-coloring of $G$ is simply an assignment $c:V(G)\rightarrow S$ for some finite color-set $S$. It is called \emph{proper} if $c^{-1}(s)$ is an independent set in $G$ for every $s \in S$. For a (not necessarily proper) coloring $c$ of a graph $G$, a subset of vertices is called a \emph{monochromatic component}, if it is a component of one of the induced subgraphs $G[c^{-1}(s)]$ for some $s \in S$. For instance, a coloring of a graph is proper iff all its monochromatic components have size $1$.

Given an integer $t \ge 1$, a \emph{$K_t$-expansion} is a graph $F$ consisting of $t$ vertex-disjoint trees $(T_s)_{s=1}^t$, each two of them joined by exactly one additional edge. The $K_t$-expansion $F$ is said to be \emph{odd} if there exists a $2$-coloring $c$ of $V(F)$ such that the restriction of $c$ to any single tree $T_s$ forms a proper coloring of that tree, while every edge joining two distinct trees is monochromatic with respect to $c$, i.e., has the same colors at its endpoints.

Finally, we say that a graph $G$ \emph{contains $K_t$ as a minor}, or that it \emph{contains a $K_t$-minor}, if $G$ contains a subgraph which is a $K_t$-expansion. Similarly, $G$ is said to \emph{contain an odd $K_t$-minor} or that it contains \emph{$K_t$ as an odd minor} if $G$ has a subgraph which is an odd $K_t$-expansion. In the opposite cases, we say that $G$ is \emph{$K_t$-minor free} or \emph{odd $K_t$-minor free}, respectively. \medskip

The work in this paper is motivated by the famous graph coloring conjecture of Hadwiger.

\begin{conjecture}[Hadwiger's conjecture, 1943, \cite{hadwiger}]
For every integer $t \ge 2$, if $G$ is a graph not containing a $K_t$-minor, then $G$ is properly $(t-1)$-colorable. 
\end{conjecture}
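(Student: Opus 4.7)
The plan would be to proceed by strong induction on $t$. The base case $t=2$ is trivial since a $K_2$-minor-free graph has no edges. For $t \in \{3,4\}$ one uses that $K_t$-minor-free graphs are forests and series-parallel graphs, respectively, both properly $(t-1)$-colorable by elementary arguments. For $t=5$ and $t=6$ I would invoke the structural characterizations of Wagner and of Robertson--Seymour--Thomas to reduce the statement to the Four Color Theorem; these are the only values of $t$ for which a full proof is currently known.

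For the general inductive step, I would take a minimum counterexample $G$ and extract structural information. Since $G$ is vertex-critical with respect to $(t-1)$-coloring, one gets $\delta(G) \ge t-1$, and separator arguments typically force $G$ to be highly connected: a small cut would split $G$ into pieces that can be colored inductively and then re-glued. The next step would be to locate a \emph{reducible configuration}, namely a subgraph whose contraction either produces a smaller counterexample (contradicting minimality) or whose existence allows one to lift a $K_{t-1}$-minor in the contracted graph to a $K_t$-minor in $G$. The plan would conclude by combining these local reductions to rule out all minimum counterexamples.

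The main obstacle is that the statement is Hadwiger's conjecture itself, a notoriously open problem for $t \ge 7$. A purely density-based induction cannot succeed because the extremal density of $K_t$-minor-free graphs is $\Theta(t\sqrt{\log t})$ by Kostochka and Thomason, far above the $t-2$ threshold that a greedy argument would need; and no structure theorem for $K_t$-minor-free graphs with large $t$ is known that is tight enough to emulate the Robertson--Seymour--Thomas strategy. The current best upper bound on the chromatic number is $O(t \log \log t)$ due to Delcourt and Postle, still asymptotically above the conjectured $t-1$. For this reason I would not expect a short direct proof; consistent with this, the present paper, as the abstract indicates, does not attempt the conjecture itself but rather a relaxation permitting improper colorings whose monochromatic components have bounded size.
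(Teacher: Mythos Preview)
Your assessment is correct: the statement is Hadwiger's conjecture, stated in the paper as motivation and background, not as a theorem to be proved. The paper contains no proof of it (nor could it, since the conjecture is open for $t \ge 7$), and you have correctly identified both the known cases and the fundamental obstacles. There is nothing to compare against.
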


A lot of work in graph theory has been inspired by and built around Hadwiger's conjecture, a survey of results and open problems covering the state of the art up until roughly $3$ years ago was written by Seymour~\cite{survey}. Hadwiger's conjecture has been proved for all values $t \le 6$ (see Robertson, Seymour and Thomas~\cite{robertson}), but remains open starting from $t=7$.
For a long time, the best asymptotic upper bound on the chromatic number of graphs with no $K_t$-minor has remained $O(t\sqrt{\log t})$ as established independently by Kostochka and Thomason~\cite{kostochka,thomason}. However, this bound was improved considerably recently, see~\cite{del,norine,norine2,postle,postle2}. The current best bound of $O(t\log\log t)$ was obtained half a year ago by Delcourt and Postle~\cite{del}.

Gerards and Seymour (see~\cite{gerards}, Section 6.5) proposed the following strengthening of Hadwiger's conjecture, called \emph{Odd Hadwiger's conjecture}. 

\begin{conjecture}[Odd Hadwiger's conjecture, 1995, \cite{gerards}]
For every integer $t \ge 2$, if $G$ is a graph not containing an odd $K_t$-minor, then $G$ is properly $(t-1)$-colorable. 
\end{conjecture}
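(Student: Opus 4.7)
The final statement is the Odd Hadwiger Conjecture, which has been open since 1995 and is not proved in the present paper; the paper actually proves only a relaxation. Any proof plan for the stated conjecture is therefore necessarily speculative.

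The natural approach is induction on $t$ via a vertex-minimum counterexample. The base cases $t \le 4$ are classical: $t=3$ reduces to the fact that a graph with no odd cycle is bipartite and hence $2$-colorable, and $t=4$ was settled by Guenin. For $t \ge 5$, take $G$ of minimum order among graphs with no odd $K_t$-minor and $\chi(G) \ge t$. Routine criticality arguments give that $G$ is $(t-1)$-connected, has minimum degree at least $t-1$, and that every proper subgraph of $G$ admits a proper $(t-1)$-coloring.

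The structural leverage available is the odd-minor structure theorem of Geelen, Gerards, Reed, Seymour and Vetta, which decomposes any odd $K_t$-minor free graph by odd clique-sums into pieces each of which becomes bipartite after removing a bounded apex set and is almost embeddable on a surface of bounded genus with a bounded number of vortices. The plan is to inductively $(t-1)$-color each piece, combining the $2$ colors coming from the bipartition with additional colors for the apex and vortex vertices, and then to glue the colorings across the odd clique-sums while aligning the parity of the bipartition sides so that edges between distinct pieces receive distinct colors.

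The main obstacle is this gluing step. For ordinary $K_t$-minors and ordinary clique-sums, the color palette on a shared clique can be permuted freely to align two pieces; for odd clique-sums, permutations must respect a bipartition parity, and simultaneously aligning many pieces along several odd clique-sums produces parity conflicts that no current technique appears to control. This is precisely why the best known upper bound on the chromatic number of odd $K_t$-minor free graphs remains of order $O(t\sqrt{\log t})$ (Geelen, Gerards, Reed, Seymour and Vetta), matching the pre-breakthrough bound for ordinary Hadwiger, and closing the gap down to $t-1$ seems to require an essentially new idea -- one which would in particular resolve the still-open ordinary Hadwiger conjecture for $t \ge 7$. This obstruction is exactly what the present paper sidesteps by proving the weaker $(2t-2)$-color improper version with bounded monochromatic components rather than the full conjecture.
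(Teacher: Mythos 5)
The statement you were asked about is not a theorem of the paper; it is the Odd Hadwiger's conjecture itself, which the paper only states as motivation and explicitly leaves open (the paper proves a relaxation: $2t-2$ colors with monochromatic components of size at most $\lceil \frac{1}{2}(t-2)\rceil$). You correctly recognize this, and your submission is accordingly not a proof but an outline of a speculative strategy together with an honest identification of where it breaks down (the parity-respecting gluing across odd clique-sums in the Geelen--Gerards--Reed--Seymour--Vetta structure). That assessment is sound: no proof of the conjecture exists for $t \ge 6$, so there is nothing in the paper to compare your argument to, and the ``gap'' in your proposal is simply the open problem itself.

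Two factual corrections to your sketch. First, the known small cases are: $t \le 4$ verified by Catlin, and $t=5$ (not $t=4$) announced by Guenin; so your induction could at best start its base cases there, and even the standard criticality reductions you invoke (e.g.\ $(t-1)$-connectivity of a minimum counterexample) are not routine in the odd-minor setting, since deleting or identifying vertices interacts badly with the parity condition. Second, the best known asymptotic bound on the chromatic number of odd $K_t$-minor free graphs is no longer $O(t\sqrt{\log t})$; as the paper notes, it has been improved to $O(t\log\log t)$ (Steiner, building on Delcourt--Postle and others), which does not change your overall conclusion but should be stated accurately.
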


To see that this conjecture indeed considerably strengthens Hadwiger's conjecture, consider for example $t=3$. While Hadwiger's conjecture in this case amounts to saying that forests (the $K_3$-minor free graphs) are $2$-colorable, the Odd Hadwiger's conjecture captures the more general satetement that all graphs without odd cycles (the odd $K_3$-minor free graphs) are $2$-colorable. In general, every $K_t$-minor free graph is also odd $K_t$-minor free, but there are odd $K_3$-minor free (i.e., bipartite) graphs which contain arbitrarily large clique minors.

The above conjecture has been verified for $t \le 4$ by Catlin~\cite{catlin}, and a solution for the case $t=5$ was announced by Guenin (cf.~\cite{survey}). For $t \ge 6$ the conjecture remains widely open. As for Hadwiger's conjecture, asymptotic upper bounds on the chromatic number of odd $K_t$-minor free graphs have been studied. An upper bound of $O(t\sqrt{\log t})$ was proved by Geelen et al. in~\cite{geelen} (see also~\cite{kawarabayashi2}), and recently this has been improved in~\cite{del,norine3,postle3,steiner}, with the current best bound being $O(t\log\log t)$ from~\cite{steiner}. For more results around the Odd Hadwiger's conjecture, we refer the interested reader to Chapter 7 of the survey~\cite{survey}.

The purpose of this paper is to prove the following relaxation of the Odd Hadwiger's conjecture, in which we allow our coloring to be improper, but instead require a constant bound (depending only on $t$) for the maximum size of monochromatic components. In return, our coloring uses much fewers colors than the known results for proper colorings.

\begin{theorem}\label{thm:main}
Let $t \ge 3$ be an integer. Then every graph $G$ without an odd $K_t$-minor admits a (not necessarily proper) vertex-coloring using $2t-2$ colors such that all monochromatic components have size at most $\lceil \frac{1}{2}(t-2) \rceil$.
\end{theorem}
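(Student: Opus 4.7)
The plan is to follow the van den Heuvel--Wood strategy for $K_t$-minor free graphs, combining a BFS parity $2$-coloring with a within-layer refinement using $t-1$ additional colors. Assuming $G$ connected, I fix a root $r\in V(G)$, run BFS, and obtain layers $L_0,L_1,\dots$ with parity function $\pi(v):=\mathrm{dist}_G(r,v)\bmod 2$. The final coloring assigns each $v\in L_i$ the pair $(\pi(v),c_i(v))\in\{0,1\}\times[t-1]$, for a total of $2(t-1)=2t-2$ colors, where $c_i:L_i\to[t-1]$ is still to be constructed. Every edge of $G$ runs either inside one BFS layer or between two consecutive layers, and the latter kind is automatically $\pi$-bichromatic, so monochromatic components of the combined coloring lie inside some $G[L_i]$ and coincide with $c_i$-monochromatic components. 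It therefore suffices to produce, for each $i$, a $(t-1)$-coloring $c_i$ of $G[L_i]$ in which every monochromatic component has at most $\lceil (t-2)/2\rceil$ vertices.

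In the $K_t$-minor free setting, the key structural observation behind van den Heuvel--Wood's argument is that, by contracting the connected subgraph $\bigcup_{j<i} L_j$ to a single apex vertex, one exhibits $G[L_i]$ plus a universal apex as a minor of $G$; hence $G[L_i]$ has Hadwiger number at most $t-2$, and an auxiliary coloring result on $K_s$-minor free graphs with bounded monochromatic components then yields $c_i$. My plan is to establish the analogous within-layer structural bound under the weaker hypothesis of odd $K_t$-minor freeness.

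The difficulty is that odd minor freeness is not preserved by arbitrary contractions: contracting $\bigcup_{j<i}L_j$ may create new odd cycles if that subgraph is not bipartite, so the apex step cannot be applied as a black box. The additional ingredient needed in the odd setting is to carry out the implication by hand and directly convert any hypothetical $K_{t-1}$-minor of $G[L_i]$ with branch sets $B_1,\dots,B_{t-1}\subseteq L_i$ into an odd $K_t$-expansion of $G$. The $t$-th branch set is taken to be a suitable subtree of the BFS tree rooted below $L_i$; since the BFS tree is bipartite, $\pi$ restricts to a proper $2$-coloring of this apex. On each $B_j$ one picks a spanning tree $T_j$ and uses its unique proper $2$-coloring, fixing the swap so that the $L_i$-endpoint of the BFS edge used as the $T_j$--apex crossing receives color $(i-1)\bmod 2$; this automatically makes every BFS edge from $T_j$ to the apex monochromatic.

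The hard part will be arranging simultaneous monochromaticity of the horizontal crossings inside $L_i$ as well. Because each $T_j$ has only two proper $2$-colorings and the swap is already fixed above, the color of every vertex of $T_j$ is determined by the parity of its $T_j$-distance to the distinguished vertex; monochromaticity of a horizontal crossing between $T_j$ and $T_{j'}$ then becomes a parity equation linking these two distances. The extra work of the odd case is to choose the branch sets $B_j$, their spanning trees, and the horizontal and BFS-tree edges used as crossings in a coordinated way, possibly by enlarging the $B_j$ with carefully chosen BFS-tree paths, so that this system of parity constraints is simultaneously satisfied. This is the place where the argument genuinely goes beyond the van den Heuvel--Wood proof. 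Once the within-layer bound is established, the theorem follows by combining $\pi$ with the $c_i$'s.
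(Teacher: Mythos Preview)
Your plan has two genuine gaps, and it also misidentifies the van den Heuvel--Wood argument it is trying to adapt.

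First, van den Heuvel and Wood do \emph{not} obtain the $2t-2$ bound with clustering $\lceil(t-2)/2\rceil$ via BFS layers. Their proof is an iterative decomposition: they peel off connected pieces $H_1,H_2,\ldots$ one at a time, each $H_{i+1}$ chosen inside a component $C$ of $G-(V(H_1)\cup\cdots\cup V(H_i))$ as a minimal connected subgraph hitting one neighbour of each of the at most $t-2$ previously built pieces adjacent to $C$. Lemma~\ref{lemma:connecting} then $2$-colours each $H_i$ with clustering $\lceil(t-2)/2\rceil$, and the ``adjacency graph'' on the pieces is $(t-2)$-degenerate, giving the second factor of $t-1$. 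Your BFS scheme does not reproduce this: even if every layer $G[L_i]$ were $K_{t-1}$-minor free, no known result gives a $(t-1)$-colouring of a $K_{t-1}$-minor free graph with clustering $\lceil(t-2)/2\rceil$. The vdH--W bound applied to $K_{t-1}$-minor free graphs yields $2t-4$ colours, and the Dvo\v{r}\'ak--Norin announcement gives $t-2$ colours but with a large, non-explicit clustering. So your ``auxiliary coloring result'' does not exist with the parameters you need, and the outer BFS $2$-colouring cannot be combined with an inner $(t-1)$-colouring in the way you describe.

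Second, the step you flag as ``the hard part'' is indeed hard, and you have not done it. Fixing the swap on each $T_j$ via a single BFS crossing leaves $\binom{t-1}{2}$ parity equations for the horizontal crossings, with no remaining freedom; there is no reason these are simultaneously satisfiable, and ``enlarging the $B_j$ with BFS-tree paths'' does not obviously help since such paths from different $B_j$ collide in the lower layers. The paper sidesteps this entirely: rather than fixing the pieces first and then struggling with parities, each piece $H_i$ is \emph{constructed} (Lemma~\ref{lemma:parityspanner}) so that its bipartite spanning subgraph between $A_i$ and $B_i$ is connected \emph{and} every outside neighbour of $H_i$ sees both $A_i$ and $B_i$. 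This last property is exactly what guarantees a monochromatic crossing edge to any later piece or component, for \emph{any} proper $2$-colouring of that later piece, with no global parity system to solve. That is the new idea that makes the extension to odd minors go through.
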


Theorem~\ref{thm:main} lines up with a wide set of results on so-called \emph{improper colorings} of graphs with excluded minors. Instead of giving a long list of the individual results, let us just point to the comprehensive 70 page-survey on improper colorings written recently by Wood~\cite{woodsurvey} as well as to Chapter~6 of Seymour's survey~\cite{survey}. Two main variants of improper colorings have been studied: \emph{clustered} and \emph{defective} colorings. Given a graph $G$ and integers $k,c, d$, we say that a $k$-vertex coloring of $G$ has \emph{clustering} $c$ if all monochromatic components have size at most $c$, and we say that it has \emph{defect} $d$ if the maximum degree of all monochromatic components is bounded by $d$. Clearly, every $k$-coloring with clustering $c$ also has defect $c-1$. We may therefore rephrase Theorem~\ref{thm:main} by saying that for $t \ge 3$, every odd $K_t$-minor free graph is $2(t-1)$-colorable with clustering $\lceil \frac{1}{2}(t-2) \rceil$ and defect $\lceil \frac{1}{2}(t-4) \rceil$. The number of colors in our result improves upon previous results for this problem by Kawarabayashi~\cite{kawarabayashi}, Kang and Oum~\cite{kangoum} and Liu and Wood~\cite{liuwood}, summarized in Table~\ref{table} below. It is optimal up to a factor of $2$, as it was shown in~\cite{edwards,kangoum} that (odd) $K_t$-minor free graphs in general do not admit $(t-2)$-colorings with clustering bounded as a function of $t$.

As an additional advantage, our result also improves the dependency of the size of the clustering on $t$: Namely, the bounds on the clustering from~\cite{kangoum,kawarabayashi,liuwood} were only given as non-explicit functions of $t$ with a superlinear dependence on $t$.

\begin{table}[h]
\centering
\begin{tabular}{|c c c c|} 
 \hline
 & number of colors  & clustering & defect \\ [0.5ex] 
 \hline\hline
 Kawarabayashi~\cite{kawarabayashi} & $496t$ & $f_1(t)$ & $f_1(t)-1$ \\
 \hline
 Kang and Oum~\cite{kangoum} & $6t-9$ & $-$ & $f_2(t)$ \\
 \hline
 Kang and Oum~\cite{kangoum} & $10t-13$ & $f_3(t)$ & $f_3(t)-1$ \\
 \hline
 Liu and Wood~\cite{liuwood} & $8t-12$ & $f_4(t)$ & $f_4(t)-1$ \\ 
 \hline
 this paper & $2t-2$ & $\lceil \frac{1}{2}(t-2) \rceil$ & $\lceil \frac{1}{2}(t-4) \rceil$ \\ 
 [0.5ex] 
 
 \hline
\end{tabular}
\caption{Bounds for improper colorings of odd $K_t$-minor free graphs. The functions $f_1(t),\ldots,f_4(t)$ are used to indicate that the bound guaranteed on the defect or clustering is only dependent on $t$. The exact dependence of these functions on $t$ was however not made explicit in~\cite{kangoum,kawarabayashi,liuwood}.}\label{table}
\end{table}

Clustered and defective colorings of $K_t$-minor free graphs have also been extensively studied, see~\cite{edwards,heuvelwood,kawmoh,liuoum,liuwood,wood2}. Here the state of the art bounds are as follows: For defective coloring it was shown by Edwards et al.~\cite{edwards} that $K_t$-minor free graphs can be $(t-1)$-colored with bounded defects, and van den Heuvel and Wood~\cite{heuvelwood} proved that the defect can be bounded by $t-2$. For clustered coloring, it has been proved that $K_t$-minor free graphs can be $(t+1)$-colored with bounded clustering by Liu and Wood~\cite{liuwood}, and an optimal bound of $t-1$ colors was announced in 2017 by Dvořák and Norine~\cite{dvorine}. A weaker bound on the number of colors, however with an explicit bound on the clustering, was previously shown by van den Heuvel and Wood, namely that every $K_t$-minor free graph is $(2t-2)$-colorable with clustering $\lceil \frac{1}{2}(t-2) \rceil$. Theorem~\ref{thm:main} extends this result by van den Heuvel and Wood to odd $K_t$-minor free graphs. 

In the remainder of this paper, we give the proof of Theorem~\ref{thm:main}. Our proof follows closely a method introduced by van den Heuvel and Wood in~\cite{heuvelwood} to first establish a decomposition of the considered graphs into nicely structured disjoint subgraphs, from which a clustered coloring can then easily be obtained. Our decomposition result (Theorem~\ref{thm:decomposition}) is similar to a corresponding result for $K_t$-minor free graphs by van den Heuvel and Wood, but enhances it by some additional features, through which the extension from $K_t$-minor free graphs to odd $K_t$-minor free graphs becomes possible. 

\section{Proof of Theorem~\ref{thm:main}}

We need the following lemma proved by van den Heuvel and Wood in~\cite{heuvelwood}.

\begin{lemma}[cf.~Lemma 8, item (4) in~\cite{heuvelwood}]\label{lemma:connecting}
Let $G$ be a connected graph, and let $S \subseteq V(G)$ be such that $|S|=k \ge 1$. Let $H \subseteq G$ be an induced connected subgraph with a minimum number of vertices such that $S \subseteq V(H)$.

Then $H$ admits a partition of its vertex-set into two disjoint (possibly empty) subsets $A$ and~$B$ such that both $G[A]$ and $G[B]$ have all their connected components of size at most $\lceil \frac{k}{2} \rceil$.
\end{lemma}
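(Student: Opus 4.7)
The plan is to exploit the minimality of $|V(H)|$, which forces every vertex in $V(H)\setminus S$ to be a cut vertex of $H$: otherwise $G[V(H)\setminus\{v\}]=H-v$ would be a connected induced subgraph of $G$ containing $S$ with strictly fewer vertices, contradicting the minimality of $H$. Consequently, in any spanning tree of $H$, every non-$S$ vertex has degree at least $2$, so the leaves of such a spanning tree belong to $S$ and there are at most $k$ of them.

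First, I would fix a BFS tree $T$ of $H$ rooted at some vertex $r\in S$, with layers $L_0,L_1,\ldots,L_d$, and set $A:=L_0\cup L_2\cup L_4\cup\cdots$ and $B:=L_1\cup L_3\cup\cdots$. Since every edge of $H$ joins vertices in the same BFS layer or in consecutive layers, and $H=G[V(H)]$ is induced, each connected component of $G[A]=H[A]$ (respectively $G[B]=H[B]$) is contained in a single layer $L_i$.

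The main step is then to bound the size of every component $C$ of $H[L_i]$ by $\lceil k/2\rceil$. To do this I would use the cut-vertex property: for each $v\in C\setminus S$, deleting $v$ must separate some $s_v\in S$ from $r$ in $H$, and such an $s_v$ is forced to sit in a layer $L_j$ with $j>i$ (because a shortest $r$-$u$ path for $u$ of depth at most $i$ can be chosen to avoid $L_i\setminus\{v\}$). One then seeks a near-injective assignment $v\mapsto s_v$ so that the set of $S$-vertices strictly below $C$ in the BFS order has size at least $|C\setminus S|$, and combines this with the obvious $|C\cap S|\le |S|$ to control $|C|$ in terms of $k$.

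The hardest part will be obtaining the sharp factor of $1/2$ in $\lceil k/2\rceil$, since a naive implementation of the above only yields $|C|\le k$. The refinement likely requires either choosing $r$ so that $S$ is evenly split across even and odd layers, or replacing the crude even/odd layering by a more delicate $2$-coloring of $T$ derived from its decomposition into leaf-to-leaf paths (of which there are at most $\lceil k/2\rceil$, as $T$ has at most $k$ leaves). In either case, the minimality of $H$ and the resulting cut-vertex structure on $V(H)\setminus S$ is the essential ingredient that allows one to bound components of $G[A]$ and $G[B]$ in terms of $|S|$ alone, rather than in terms of $|V(H)|$.
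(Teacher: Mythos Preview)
The paper does not prove this lemma at all; it is quoted verbatim from van~den~Heuvel and Wood~\cite{heuvelwood} and used as a black box. So there is no ``paper's own proof'' to compare against, and your task was really to supply an independent argument.

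Your outline has a genuine gap, and it is more serious than you indicate. The BFS-layer partition can produce monochromatic components of size as large as $k-1$, not merely $k$, and neither of your suggested refinements closes the gap to $\lceil k/2\rceil$. Here is a concrete counterexample. Take $G=H$ on vertices $r,a,b,c,s_2,s_3,s_4$ with edges
\[
ra,\ rb,\ rc,\ ab,\ ac,\ bc,\ as_2,\ bs_3,\ cs_4,
\]
and set $S=\{r,s_2,s_3,s_4\}$, so $k=4$ and $\lceil k/2\rceil=2$. One checks that $H$ is the unique (hence minimum) connected induced subgraph of $G$ containing $S$: removing any of $a,b,c$ isolates the corresponding $s_i$. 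BFS from $r$ gives $L_0=\{r\}$, $L_1=\{a,b,c\}$, $L_2=\{s_2,s_3,s_4\}$, and $H[L_1]$ is a triangle, a single monochromatic component of size $3>2$. Moreover, by the symmetry of the example, rooting the BFS at $s_2,s_3$ or $s_4$ also produces a triangle in some layer, so your refinement~(a) of choosing $r$ cleverly cannot help. A correct partition for this $H$ does exist, e.g.\ $A=\{r,c,s_2,s_3\}$, $B=\{a,b,s_4\}$, but it is \emph{not} a union of BFS layers: one must split the layer $\{a,b,c\}$ across both colour classes.

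This shows that your whole framework---colour by BFS parity and then bound each layer-component via cut-vertex/leaf counting---cannot reach $\lceil k/2\rceil$ without a fundamentally different colouring rule. Your refinement~(b), ``a $2$-colouring derived from a decomposition of $T$ into at most $\lceil k/2\rceil$ leaf-to-leaf paths'', is pointing in a more promising direction, but as stated it is only a slogan: you have not said how a path decomposition of a spanning tree $T$ of $H$ yields a $2$-colouring of $V(H)$ whose monochromatic components in $H$ (not just in $T$) are small. Until that step is made precise, the proof is incomplete.
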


The main idea of our proof is the following modified version of the above lemma, which will be useful for constructing odd minors.

\begin{lemma}\label{lemma:parityspanner}
Let $G$ be a connected graph, and let $S \subseteq V(G)$ be such that $|S|=k \ge 1$. Then there exists a connected induced subgraph $H \subseteq G$ with $S \subseteq V(H)$ such that the following hold:
\begin{enumerate}
    \item $H$ admits a partition of its vertex-set into two disjoint subsets $A$ and $B$ such that both $G[A]$ and $G[B]$ have maximum component-size at most $\lceil \frac{k}{2} \rceil$.
    \item The spanning bipartite subgraph of $H$ containing all the edges with one endpoint in $A$ and one endpoint in $B$ is connected.
    \item For every vertex $v \in V(G)\setminus V(H)$ which is connected in $G$ to at least one vertex in $V(H)$, there exist vertices $a \in A$ and $b \in B$ such that $av, bv \in E(G)$.
    
\end{enumerate}
\end{lemma}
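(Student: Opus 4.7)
My plan is to choose the subgraph $H$ together with its partition $(A,B)$ as an extremal object—specifically, a triple $(H,A,B)$ maximizing $|V(H)|$ among all triples for which $H$ is an induced connected subgraph of $G$ containing $S$ and the partition $(A,B)$ of $V(H)$ satisfies (1) and (2). Once such an extremal triple is secured, property (3) will follow by a one-vertex absorption argument.

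\emph{Existence of an admissible triple.} To ensure this collection is nonempty, I apply Lemma~\ref{lemma:connecting} to obtain a minimum induced connected subgraph $H^{*}\supseteq S$. Rather than using the partition supplied by the lemma as a black box, I would take a BFS spanning tree $T^{*}$ of $H^{*}$ rooted at some $r\in S$ and let $A^{*},B^{*}$ be the parity classes of vertex depths in $T^{*}$. Every edge of $T^{*}$ then crosses this partition, so $T^{*}$ is contained in the spanning bipartite subgraph of $H^{*}$, which is therefore connected; this yields (2). Property (1) for the BFS-parity partition follows from the same argument underlying the proof of Lemma~\ref{lemma:connecting} in~\cite{heuvelwood}, which is itself based on exactly this parity decomposition, so at the very least one recovers Lemma~\ref{lemma:connecting} for the partition $(A^{*},B^{*})$.

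\emph{Maximality forces (3).} Let $(H,A,B)$ achieve the maximum value of $|V(H)|$ among admissible triples, and let $v\in V(G)\setminus V(H)$ have a neighbor in $V(H)$. Suppose for contradiction that $v$ has no neighbor in $B$ (the other case being symmetric). Define $H':=G[V(H)\cup\{v\}]$, $A':=A$, and $B':=B\cup\{v\}$. Then $H'$ is induced and connected (since $H$ is connected and $v$ has a neighbor in $V(H)$) and contains $S$. Because $v$ has no neighbor in $B$, it is isolated in $G[B']$, so every component of $G[B']$ is of size at most $\lceil k/2 \rceil$, while $G[A']=G[A]$ is unchanged; hence (1) is preserved. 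Because $v$ has a neighbor in $A'=A$, the vertex $v$ is attached by a cross-edge to the already-connected bipartite subgraph of $H$, so the bipartite subgraph of $H'$ between $A'$ and $B'$ is still connected, giving (2). Thus $(H',A',B')$ is admissible with $|V(H')|=|V(H)|+1$, contradicting maximality.

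\emph{Main obstacle.} The delicate point is the existence step: one must verify that property (1) survives when we insist on using the BFS-parity partition, rather than relying only on the abstract existence of a partition asserted by Lemma~\ref{lemma:connecting}. Once any admissible triple is in hand, the absorption/maximality argument is essentially mechanical; the key structural observation is that property (2) is precisely the invariant that allows us to push a one-sided external vertex $v$ onto the missing side of the partition without ever enlarging a monochromatic component of $G[A]$ or $G[B]$.
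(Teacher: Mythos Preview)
Your absorption argument for (3) is correct, and the extremal-triple strategy is the right idea. The gap is exactly where you flag it: you need a triple satisfying both (1) and (2) to get started, and your BFS-parity claim is asserted rather than proved. Whether the van den Heuvel--Wood argument for Lemma~\ref{lemma:connecting} actually produces a partition satisfying (2) depends on details you have not supplied, so as written the existence step is incomplete.

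The paper sidesteps this by changing the extremal quantity. Instead of maximizing $|V(H)|$ over triples satisfying (1) \emph{and} (2), it maximizes the number of $A$--$B$ edges over all triples $(H,A,B)$ satisfying only (1); such triples exist by Lemma~\ref{lemma:connecting} used purely as a black box. Property (2) then drops out of maximality by the standard max-cut swap: if the bipartite subgraph between $A$ and $B$ were disconnected with vertex parts $X,Y$, replace $(A,B)$ by $\bigl((X\cap A)\cup(Y\cap B),\,(X\cap B)\cup(Y\cap A)\bigr)$. This preserves (1), since every component of $G[A']$ or $G[B']$ is contained in a component of $G[A]$ or $G[B]$, and it strictly increases the cut because $H$ is connected and hence has an edge between $X$ and $Y$, which is monochromatic for $(A,B)$ but crossing for $(A',B')$. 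Property (3) follows by exactly the one-vertex absorption you wrote, which also strictly increases the cut. Thus one extremal choice handles (2) and (3) simultaneously, with no need to reopen Lemma~\ref{lemma:connecting}. If you prefer to keep your maximization of $|V(H)|$, you can repair the existence step by first applying this same max-cut swap to any triple from Lemma~\ref{lemma:connecting} until (2) holds; but at that point you have essentially reproduced the paper's argument.
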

\begin{proof}
By Lemma~\ref{lemma:connecting}, there exists at least one connected induced subgraph $H_0$ of $G$ such that $S \subseteq V(H_0)$ and a partition of $V(H_0)$ into subsets $A_0, B_0$ such that both $G[A_0], G[B_0]$ have maximum component-size at most $\lceil \frac{k}{2} \rceil$.

Now, let $(H,A,B)$ be a triple consisting of a connected induced subgraph $H \subseteq G$ with $S \subseteq V(H)$ and a partition $V(H)=A 
\cup B$ of its vertex-set such that $G[A]$ and $G[B]$ have maximum component-size at most $\lceil \frac{k}{2}\rceil$, chosen such that the number of edges in $H$ spanned between $A$ and $B$ is maximized among all possible choices of such triples.

We now claim that $H$ with the partition $A, B$ satisfies all three properties required by the lemma. Statement $(1)$ follows directly by our choice of the triple. To verify~$(2)$, suppose towards a contradiction that the spanning bipartite subgraph containing the edges in $H$ between $A$ and $B$ is disconnected. This would mean that there exists a partition of $V(H)$ into non-empty sets $X, Y$ such that there are no edges between $X \cap A$ and $Y \cap B$, and no edges between $X \cap B$ and $Y \cap A$ in $H$. Now define a new partition of $V(H)$ by $A':=(X \cap A) \cup (Y \cap B)$, and $B':=(X \cap B) \cup (Y \cap A)$. It is easy to see that since no edges in $G$ connect $X \cap A$ and $Y \cap B$ or $X \cap B$ and $Y \cap A$, every component of $G[A']$ or $G[B']$ is fully contained in either $A$ or $B$, and hence is contained in a component of either $G[A]$ or $G[B]$, and hence has size at most $\lceil\frac{k}{2}\rceil$. However, since $H$ is connected, there exists at least one edge $e \in E(H)$ with endpoints in $X$ and $Y$, which then must connect $X \cap A$ and $Y \cap A$ or $X \cap B$ and $Y \cap B$. In each case, $e$ is contained in the bipartite subgraph of $H$ spanned between $A'$ and $B'$. Also, every edge in $H$ in the bipartite subgraph spanned between $A$ and $B$ also has exactly one endpoint in $A'$ and in $B'$. Hence, $(H,A',B')$ is a triple satisfying all required properties which has strictly more edges between different sets in the partition than $(H,A,B)$. This is a contradiction to our choice of $(H,A,B)$, and proves $(2)$.

Finally, let us verify $(3)$. Towards a contradiction, suppose that there exists a vertex $v \in V(G)\setminus V(H)$ such that $v$ is connected in $G$ to at least one vertex in $V(H)$, but it does not have neighbors both in $A$ and in $B$. Then, w.l.o.g. (renaming $A$ and $B$ if necessary) we may assume that $v$ has no neighbors in $A$. Now, let $H':=G[V(H) \cup \{v\}]$ and put $A':=A \cup \{v\}$ and $B':=B$. Then $H'$ is a connected induced subgraph of $G$ with $S \subseteq V(H)\subseteq V(H')$. Since $v$ has no neighbors in $A$, every component in $G[A']$ or $G[B']$ is either a component of $G[A]$ or $G[B]$ and hence has size at most $\lceil\frac{k}{2}\rceil$, or is equal to $\{v\}$ and has size $1 \le \lceil\frac{k}{2}\rceil$. 

Furthermore, the number of edges in $H'$ spanned between $A'$ and $B'$ is strictly greater than the number of edges in $H$ spanned between $A$ and $B$, since in addition to these edges we have the edges incident to $v$ in $H'$. This again shows that $(H,A',B')$ is a triple satisfying all required properties with more edges between different sets in the partition than $(H,A,B)$, contradicting our maximality assumption. This shows that also $(3)$ is satisfied for $(H,A,B)$ and concludes the proof of the lemma.
\end{proof}

We next use the above lemma to prove the following decomposition result, which resembles a corresponding decomposition theorem proved by van den Heuvel and Wood in~\cite{heuvelwood} for $K_t$-minor graphs (compare Theorem~11 in~\cite{heuvelwood}). It extends part of the latter result with some additional features that will allow us to relate to odd minor containment instead of ordinary minor containment when building the decomposition of our graph. Once the decomposition theorem (Theorem~\ref{thm:decomposition} below) is established, Theorem~\ref{thm:main} will follow easily.

\begin{theorem}\label{thm:decomposition}
Let $t \ge 3$ be an integer, and let $G$ be a connected graph without an odd $K_t$-minor. Then there exists $\ell \in \mathbb{N}$ and a collection $H_1\ldots,H_\ell$ of vertex-disjoint induced connected subgraphs of $G$ with $V(H_1) \cup \cdots \cup V(H_\ell)=V(G)$ such that all of the following properties are satisfied for every $i \in [\ell]$:
\begin{enumerate}
    \item $H_i$ admits a partition of its vertex-set into two disjoint parts $A_i$ and $B_i$ such that in each of $G[A_i]$, $G[B_i]$, the maximum component-size is at most $\lceil \frac{t-2}{2} \rceil$.
    \item The spanning bipartite subgraph of $H_i$, containing all edges of $H_i$ with endpoints in $A_i$ and $B_i$, is connected.
    \item For every vertex $v \in V(G)\setminus (V(H_1) \cup \cdots \cup V(H_i))$ which is connected in $G$ to at least one vertex in $V(H_i)$, there exist vertices $a \in A_i, b \in B_i$ such that $av, bv \in E(G)$. 
    \item For every connected component $C$ of $G-(V(H_1) \cup \cdots \cup V(H_i))$, at most $(t-2)$ among the subgraphs $H_1, \ldots, H_i$ are adjacent to $G[C]$, and these subgraphs are pairwise adjacent to each other. 
\end{enumerate}
\end{theorem}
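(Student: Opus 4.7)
The plan is to build the subgraphs $H_1,H_2,\ldots,H_\ell$ one at a time, at each step applying Lemma~\ref{lemma:parityspanner} inside a single component of the current ``leftover'' graph, and maintaining (1)--(4) as an inductive invariant. At step $i+1$, if $V(H_1)\cup\cdots\cup V(H_i)\neq V(G)$, I pick any connected component $C$ of $G-(V(H_1)\cup\cdots\cup V(H_i))$ and let $I:=\{j\in[i]: H_j \text{ is adjacent to } G[C]\}$; by the invariant (4) we have $|I|\le t-2$. For each $j\in I$ I pick a vertex $s_j\in V(C)$ having a neighbor in $V(H_j)$, set $S:=\{s_j:j\in I\}$ (or $S:=\{v\}$ for an arbitrary $v\in V(C)$ if $I=\emptyset$), and observe that $|S|\le \max(|I|,1)\le t-2$ since $t\ge 3$. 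Applying Lemma~\ref{lemma:parityspanner} to $G[C]$ and $S$ produces $H_{i+1}\subseteq G[C]$ together with a partition $A_{i+1}\cup B_{i+1}$. The procedure terminates because each $H_{i+1}$ is nonempty.

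Items (1) and (2) of the theorem are immediate from the corresponding items of Lemma~\ref{lemma:parityspanner}, while for (3) the key point is that any vertex $v\in V(G)\setminus(V(H_1)\cup\cdots\cup V(H_{i+1}))$ adjacent to $V(H_{i+1})\subseteq V(C)$ must itself lie in $V(C)$, since otherwise it would belong to a different component of $G-(V(H_1)\cup\cdots\cup V(H_i))$ and could not reach $V(H_{i+1})$; item (3) of the lemma then supplies the required $a\in A_{i+1}$ and $b\in B_{i+1}$.

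The bulk of the argument is re-establishing (4). Let $C'$ be any component of $G-(V(H_1)\cup\cdots\cup V(H_{i+1}))$, and let $L:=\{j\in[i+1]: H_j\text{ adj } G[C']\}$. Since $H_{i+1}\subseteq V(C)$, either $C'$ is a component of the old leftover graph different from $C$ (so $i+1\notin L$ and $L\subseteq I$, and (4) follows from the invariant), or $C'\subseteq C$, in which case $L\setminus\{i+1\}\subseteq I$. Pairwise adjacency inside $L$ follows from the invariant on pairs in $[i]$, and, on any pair $(i+1,k)$ with $k\in I\cap L$, from our choice of $S\subseteq V(H_{i+1})$, which contains a neighbor of $V(H_k)$. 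It remains to show $|L|\le t-2$, and this is where the odd $K_t$-minor hypothesis enters: I assume $i+1\in L$ (else the bound already follows from $|I|\le t-2$) and, for contradiction, $|L|\ge t-1$, then build an odd $K_t$-expansion from any $t$ members of the pairwise adjacent family $\{H_j:j\in L\}\cup\{G[C']\}$.

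To build the expansion, for each $j\in L$, using item (2) of Lemma~\ref{lemma:parityspanner} I pick a spanning tree $T_j$ of the connected spanning bipartite subgraph of $H_j$ between $A_j$ and $B_j$, and I let $T$ be any spanning tree of $G[C']$. I then $2$-color the union of the tree vertex sets by painting every vertex of $\bigcup_{j\in L} A_j$, together with one side of a bipartition of $T$, red, and everything else blue; this is automatically proper on each $T_j$ and on $T$. For the inter-tree edge between $T_j$ and $T_k$ with $j>k$ in $L$, I pick any $u\in V(H_j)$ witnessing $H_j\sim H_k$; since $u\notin V(H_1)\cup\cdots\cup V(H_k)$, item (3) applied to $H_k$ provides neighbors of $u$ in both $A_k$ and $B_k$, so I select the one whose color matches $u$. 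Analogously, for the edge between $T$ and $T_j$, any $v\in V(C')$ neighboring $V(H_j)$ has, by item (3) applied to $H_j$, neighbors in both $A_j$ and $B_j$, and I select the matching one. The resulting subgraph of $G$ is an odd $K_t$-expansion, contradicting the hypothesis. I expect this final contradiction to be the main obstacle; everything else is essentially bookkeeping around the iterative application of Lemma~\ref{lemma:parityspanner}.
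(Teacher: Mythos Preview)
Your approach is essentially the paper's: iteratively apply Lemma~\ref{lemma:parityspanner} inside a leftover component, and for invariant~(4) derive a contradiction by building an odd $K_t$-expansion from the $H_j$ with $j\in L$ together with $G[C']$, using invariant~(3) to select monochromatic connecting edges. One small slip: in the case where $C'$ is a component of $G-(V(H_1)\cup\cdots\cup V(H_i))$ distinct from $C$, you write ``$L\subseteq I$'', but $I$ was defined relative to $C$, not $C'$, so this inclusion need not hold; the correct (and sufficient) justification is simply that invariant~(4) at step~$i$ applies directly to $C'$ itself. A cosmetic difference is that the paper constructs $H_1$ separately as an inclusion-maximal connected bipartite induced subgraph, whereas you apply the lemma uniformly with $|S|=1$ in the base step; both choices verify invariants~(1)--(4) for $i=1$.
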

\begin{proof}
We construct the induced connected subgraphs $H_1,\ldots,H_\ell$ iteratively, maintaining the properties $(1)-(4)$ for all already constructed subgraphs in the sequence during the process.

Let $\mathcal{Z}$ denote the collection of all vertex-subsets $Z \subseteq V(G)$ such that $G[Z]$ is bipartite and connected (note that $\mathcal{Z} \neq \emptyset$, since every singleton-set in $V(G)$ belongs to $\mathcal{Z}$). Let now $X \in \mathcal{Z}$ be an inclusion-wise maximal member of $\mathcal{Z}$, and define $H_1:=G[X]$. By choice of $X$, the subgraph $H_1$ of $G$ is induced, bipartite and connected. Let us further verify that the invariants $(1)-(4)$ are satisfied: To verify $(1)$, we can simply let $A_1, B_1$ be the color classes of a bipartition of $H_1$. Item $(2)$ is satisfied trivially, since $H_1$ is connected and all edges of $H_1$ go between $A_1$ and $B_1$. For item $(3)$, consider any vertex $v \in V(G)\setminus X$ which has a neighbor in $X$. We have $X \cup \{v\} \notin \mathcal{Z}$ by our choice of $X$, and hence, $G[X \cup \{v\}]$ is non-bipartite. This means that $v$ must have neighbors both in $A_1$ and $B_1$, for otherwise either $(A_1 \cup \{v\},B_1)$ or $(A_1,B_1\cup \{v\})$ would form a bipartition of $G[X \cup \{v\}]$. Finally, this implies that there are neighbors $a \in A_1, b \in B_1$ of $v$, as required. Finally, item $(4)$ is trivially satisfied, since $t-2 \ge 1$. 

Next, suppose that for some integer $i \ge 1$ we have already constructed disjoint induced connected subgraphs $H_1,\ldots,H_i$ of $G$, each satisfying the invariants $(1)-(4)$, but such that $V(H_1) \cup \cdots \cup V(H_i) \neq V(G)$. Now, pick (arbitrarily) a connected component $C$ of the graph $G-(V(H_1) \cup \cdots \cup V(H_i))$. Let $Q_1,\ldots,Q_k$ be the (ordered) sublist of $H_1,\ldots,H_i$, containing exactly those subgraphs which are adjacent to $G[C]$. Since $G$ is connected, we have $k \ge 1$. By the invariant $(4)$ we furthermore know that $k \le t-2$ and that $Q_1, \ldots, Q_k$ are pairwise adjacent to each other. For every index $j \in [k]$, by definition there exists a vertex $v_j \in C$ such that $v_j$ has a neighbor in $Q_j$. Let $S:=\{v_1,\ldots,v_k\}$. Now, apply Lemma~\ref{lemma:parityspanner} to the connected graph $G[C]$ and the set $S$. We conclude that there exists an induced and connected subgraph $H$ of $G$ such that $S \subseteq V(H) \subseteq C$, equipped with a partition of its vertex-sets into subsets $A$ and $B$ such that 
\begin{itemize}
\item all components of $G[A]$ and $G[B]$ have size at most $\left\lceil \frac{|S|}{2} \right\rceil \le \lceil \frac{k}{2} \rceil \le \lceil \frac{t-2}{2} \rceil$,
\item the spanning bipartite subgraph of $H$ containing all edges of $H$ spanned between $A$ and $B$ is connected,
\item every vertex $v \in C\setminus V(H)$ which is connected to a vertex in $V(H)$ has neighbors both in $A$ and in $B$. 
\end{itemize}
We now finally define $H_{i+1}:=H$ and $A_{i+1}:=A, B_{i+1}:=B$, and claim that the extended sequence $H_1,\ldots, H_i, H_{i+1}$ still satisfies the invariants $(1)-(4)$. That the invariants $(1)$ and $(2)$ remain valid is an immediate consequence of the first two properties of $H$ listed above. 
Let us now verify that invariants $(3)$ and $(4)$ hold (and clearly, these need only be checked for the index $i+1$, since the claim is satisfied for smaller indices by assumption). 

For invariant $(3)$, let a vertex $v \in V(G) \setminus (V(H_1) \cup \cdots \cup V(H_i) \cup V(H_{i+1}))$ be given arbitrarily, and suppose that $v$ has at least one neighbor in $H_{i+1}$. Note that this implies that $v \in C$, since $C$ is a connected component of $G-(V(H_1) \cup \cdots \cup V(H_i))$ and $V(H_{i+1})=V(H) \subseteq C$. Therefore, by the third property of $H$ listed above, we conclude that $v$ has neighbors both in $A_{i+1}=A$ and in $B_{i+1}=B$. This verifies that the invariant $(3)$ remains satisfied. 

Finally, let us consider invariant $(4)$. For this purpose, let a connected component $C'$ of the graph $G-(V(H_1) \cup \cdots \cup V(H_i) \cup V(H_{i+1}))$ be given to us arbitrarily. Let $\mathcal{Q} \subseteq \{H_1,\ldots,H_i,H_{i+1}\}$ contain all the subgraphs adjacent to $G[C']$ in $G$. In order to verify invariant $(4)$ for $C'$, we need to show that $|\mathcal{Q}|\le t-2$ and that the members of $\mathcal{Q}$ are pairwise adjacent to each other.

Since $C$ is a connected component of $G-(V(H_1) \cup \cdots \cup V(H_i))$, we must either have $C' \cap C=\emptyset$ or $C' \subseteq C$, for otherwise $C \cup C' $ would induce a connected subgraph of $G-(V(H_1) \cup \cdots \cup V(H_i))$ and strictly contain $C$, a contradiction. For the same reason, if $C' \cap C = \emptyset$ then there is no edge in $G$ connecting $C$ to $C'$, and hence $C'$ in particular forms a connected component also of the graph $G-(V(H_1) \cup \cdots \cup V(H_i))$, and $H_{i+1} \notin \mathcal{Q}$. Therefore, in the case $C' \cap C = \emptyset$ the facts that $|\mathcal{Q}|\le t-2$ and that the members of $\mathcal{Q}$ are pairwise adjacent to each other follow from invariant $(4)$ for index $i$, which is satisfied by our initial assumptions. 

Moving on, suppose that $C' \subseteq C$. Then we clearly must have $\mathcal{Q} \subseteq \{Q_1,\ldots,Q_k,H_{i+1}\}$. Note that by invariant $(4)$ for index $i$ (applied with the component $C$), the subgraphs $Q_1,\ldots,Q_k$ are pairwise adjacent in $G$. Furthermore, since $S=\{v_1,\ldots,v_k\} \subseteq V(H)=V(H_{i+1})$ by our choice of $H$, we know that $H_{i+1}$ is adjacent to each of $Q_1,\ldots,Q_k$ in $G$. Hence, the members of $\mathcal{Q}$ are pairwise adjacent to each other. It remains to be shown that $|\mathcal{Q}| \le t-2$. Towards a contradiction, suppose that $|\mathcal{Q}| \ge t-1$. We have $k \le t-2$, and therefore this is only possible if $k=t-2$ and  $\mathcal{Q}=\{Q_1,\ldots,Q_{t-2},H_{i+1}\}$. 

We will now obtain the desired contradiction to the above assumption by constructing an odd $K_t$-expansion which is a subgraph of $G$ (clearly this does not exist by assumption on $G$). 
Let us denote by $i_1<i_2<\cdots<i_{t-1}=i+1$ the sequence of indices such that $\{Q_1,\ldots,Q_{t-2},H_{i+1}\}=\{H_{i_1},\ldots,H_{i_{t-1}}\}$. By invariant $(2)$ for $H_1, \ldots, H_i, H_{i+1}$, for every $s \in [i+1]$ we know that the bipartite spanning subgraph of $H_s$ containing the edges spanned between $A_s$ and $B_s$ is connected, and therefore admits a spanning tree $T_s$. This is a spanning tree of $H_s$ which uses only edges spanned between $A_s$ and $B_s$, for every $s \in [i+1]$. Furthermore, let $T$ be any fixed spanning tree of the connected graph $G[C']$. Finally, consider a $2$-color-assignment $c:\left(\bigcup_{j=1}^{t-1}{V(T_{i_j})}\right) \cup V(T) \rightarrow \{1,2\}$ to the vertices in the $t$ disjoint trees $T_{i_1},\ldots,T_{i_{t-1}}, T$ by piecing together proper $2$-colorings of the individual trees. To finish the construction of the odd $K_t$-minor, we need the following claim. 

\medskip
$(\ast)$ Any pair of two distinct trees from the collection $T_{i_1},\ldots,T_{i_{t-1}}, T$ is joined by at least one edge $xy \in E(G)$ satisfying $c(x)=c(y)$. 
\medskip

\begin{proof}[Proof of $(\ast)$]
Consider first the case that the pair of trees is of the form $T_{s_1}, T_{s_2}$ with $s_1<s_2$ and $s_1, s_2 \in \{i_1,\ldots,i_{t-1}\}$. Then, since $H_{s_1}, H_{s_2} \in \mathcal{Q}$ are adjacent, there exists a vertex $y \in V(T_{s_2})=V(H_{s_2})$ which is connected to a vertex in $V(T_{s_1})=V(H_{s_1})$. By invariant $(3)$, applied for the index $s_1$ and the vertex $y$, we find that $y$ must have neighbors $a \in A_{s_1}$ and $b \in B_{s_1}$ in $G$. Note that since $c$ restricted to $V(T_{s_1})$ is a proper coloring, we must have $c(a) \neq c(b)$. Hence, there exists $x \in \{a,b\}$ with $c(x)=c(y)$, and the edge $xy \in E(G)$ connecting $T_{s_1}$ and $T_{s_2}$ verifies $(\ast)$ in this case. 

Secondly, consider the case that the pair of trees is of the form $T_s, T$ for some $s \in \{i_1,\ldots,i_{t-1}\}$. Since $G[C']$ by definition is adjacent to every member of $\mathcal{Q}$, which includes $H_s$, analogous to the previous case there exists a vertex $y \in V(T)$ which is connected to $H_s$. Applying the invariant $(3)$ with the index $s$ and the vertex $y$ now yields that there are neighbors $a \in A_s, b \in B_s$ of $y$, and as above, we conclude that since $c(a)\neq c(b)$ there exists $x \in \{a,b\}$ with $c(x)=c(y)$. The edge $xy$ is monochromatic and connects $T_s$ and $T$, thus $(\ast)$ is verified also in the second case. 

This proves $(\ast)$.
\end{proof}
Now the collection of the $t$ disjoint trees $T_{i_1},\ldots,T_{i_{t-1}}, T$ in $G$, the coloring $c$ as well as the monochromatic edges guaranteed between each pair of trees by $(\ast)$ certify that $G$ contains an odd $K_t$-expansion. This is a contradiction to the assumption that $G$ is odd $K_t$-minor free, and hence, our above assumption that $|\mathcal{Q}| \ge t-1$ was wrong. This concludes the proof that also the invariant $(4)$ remains satisfied after extending the sequence $H_1,\ldots,H_i$ of subgraphs by $H_{i+1}$. 

Finally, since all the subgraphs $H_1, H_2, \ldots$ as defined above are non-empty, after finitely many steps the union of the subgraphs will cover all vertices of $G$, i.e., we will find an integer $\ell \ge 1$ such that $V(H_1) \cup \cdots \cup V(H_\ell)=V(G)$ forms a partition of $G$, with all four invariants $(1)-(4)$ satisfied for each index $i \in [\ell]$. This concludes the proof of the theorem.

\end{proof}

After having done the main bulk of work in the previous proof, we can now easily conclude Theorem~\ref{thm:main}.

\begin{proof}[Proof of Theorem~\ref{thm:main}]
Let $t \ge 3$ be an integer an let $G$ be any given odd $K_t$-minor free graph. W.l.o.g.~we may assume that $G$ is connected. We apply Theorem~\ref{thm:decomposition} to obtain a collection $H_1, \ldots, H_\ell$ of connected induced subgraphs of $G$ such that 

\begin{itemize}
    \item $V(H_1), \ldots, V(H_\ell)$ forms a partition of $V(G)$,
    \item every graph $H_i$ with $i \in [\ell]$ admits a $2$-coloring $f_i:V(H_i) \rightarrow \{1,2\}$ with monochromatic components of size at most $\lceil \frac{t-2}{2}\rceil$ (by property $(1)$ in Theorem~\ref{thm:decomposition}), and
    \item for every $1 \le i < \ell$ the subgraph $H_{i+1}$ is adjacent in $G$ to at most $t-2$ among the subgraphs $H_1,\ldots,H_{i}$ (by property $(4)$ in Theorem~\ref{thm:decomposition}, applied to the connected component of $G-(V(H_1) \cup \cdots \cup V(H_i))$ which contains $V(H_{i+1})$). 
\end{itemize}

Now define an auxiliary simple graph on the vertex-set $[\ell]$, in which two indices $i, j \in [\ell]$ are made adjacent if and only if the subgraphs $H_i$ and $H_j$ are adjacent in $G$. By the third item above, this graph is $(t-2)$-degenerate, and hence, it has chromatic number at most $(t-2)+1=t-1$. Fix a proper $(t-1)$-coloring $f:[\ell]\rightarrow [t-1]$ of this auxiliary graph. Now consider the product coloring $g:V(G) \rightarrow [t-1] \times \{1,2\}$, defined by $g(x):=(f(i), f_i(x))$ for every $x \in V(H_i)$. From the definition of the auxiliary graph and since $f$ is a proper coloring we have that every monochromatic component in $G$ with respect to the coloring $g$ must be fully included in $V(H_i)$ for some $i \in [\ell]$. But then it is a monochromatic component also of the coloring $f_i$ of $H_i$, and hence by the second item above cannot be of size more than $\lceil \frac{t-2}{2}\rceil$. Since $g$ uses a color-set of size $2(t-1)$, this proves the claim of the theorem.
\end{proof}

\end{document}